\newcommand{\qed}{\hfill \rule{2.5mm}{2.5mm}}
\newcommand{\R}{{\mathbb R}}
\newcommand{\N}{{\mathbb N}}
\begin{document}
\newtheorem{thm}{Theorem}[section]
\newtheorem{defs}[thm]{Definition}
\newtheorem{note}[thm]{Note}
\newtheorem{lem}[thm]{Lemma}
\newtheorem{rem}[thm]{Remark}
\newtheorem{cor}[thm]{Corollary}
\newtheorem{prop}[thm]{Proposition}
\renewcommand{\theequation}{\arabic{section}.\arabic{equation}}
\newcommand{\newsection}[1]{\setcounter{equation}{0} \section{#1}}
\title{The H\'ajek-R\'enyi-Chow maximal inequality and a strong law of large numbers in Riesz spaces
      \footnote{{\bf Keywords:} Riesz spaces; vector lattices; maximal inequality; Clarkson's inequality; submartingale convergence; strong law of large numbers.\
      {\em Mathematics subject classification (2010):} 46B40; 60F15; 60F25.}}
\author{Wen-Chi Kuo\footnote{Supported in part by  National Research Foundation of South Africa grant number CSUR160503163733.},
David F. Rodda\footnote{Supported in part by National Research Foundation of South Africa grant number 110943.}\, \&
 Bruce A. Watson \footnote{Supported in part by the Centre for Applicable Analysis and
Number Theory and by National Research Foundation of South Africa grant IFR170214222646 with grant no. 109289.} \\
School of Mathematics\\
University of the Witwatersrand\\
Private Bag 3, P O WITS 2050, South Africa }
\maketitle
\begin{abstract}\noindent
In this paper we generalize the H\'ajek-R\'enyi-Chow maximal inequality for submartingales to $L^p$ type Riesz spaces with conditional expectation operators. As applications we obtain a submartingale convergence theorem
 and a strong law of large numbers in Riesz spaces. Along the way we develop a Riesz space variant of the Clarkson's inequality for $1\le p\le 2$. 
\end{abstract}
\parindent=0cm
\parskip=0.5cm

\newsection{Introduction}

In a Dedekind complete Riesz space $E$ with weak order unit, say $e$, we say that $T$ is a conditional expectation on $E$ if $T$ is a
positive order continuous linear projection on $E$ which maps weak order units to weak order units and has range, $R(T)$, a Dedekind complete Riesz subspace of $E$, see \cite{KLW2} for more details. It should be noted that the only conditional expectation 
operator which is also a band projection is the identity map.
A conditional expectation operator $T$ is said to be strictly positive if $T|f|=0$ implies that $f=0$.
Every Archimedean Riesz space $E$ can be extended uniquely (up to Riesz isomorphism) to a universally complete space $E^{u}$, see \cite{RieszSpacesIIZaanenBook}. 
It was shown in \cite{KLW2} that the domain of a strictly positive conditional expectation operator $T$ can be extended to its natural domain $L^{1}(T)$ in $E^{u}$. In particular $L^{1}(T)=dom(T)-dom(T)$ where $f\in dom(T)$ if $f\in E^u_+$, the positive cone of $E^u$, and there is an upwards directed net $f_\alpha$ in $E_+$ with the net $Tf_\alpha$ order bounded in $E^u$ and in this case the value assigned to $Tf$ is the order limit in $E^u$ of the net $Tf_\alpha$. 
Given that the above extensions can be made we will assume throughout that $T$ is a conditional expectation operator acting on $L^{1}(T)$. 
The space $E^{u}$ is an $f$-algebra with multiplication defined so that
the chosen weak order unit, $e$, is the algebraic unit.
Further, it was shown in \cite{KRW} that $R(T)=\{Tf|f\in L^{1}(T)\}$ is an $f$-algebra and $L^{1}(T)$ is an $R(T)$-module with $R(T)$-valued norm $\|f\|_{1,T}:=T|f|$.
The space $L^2(T)$ was introducted in \cite{LW} and generalized to $L^p(T) = \{x\in L^{1}(T): |x|^{p}\in L^{1}(T)\}, 1<p<\infty,$ in \cite{LpSpacesAz} where functional calculus was used to define $f(x)=x^p$ for $x\in E_+^u$. Much of the mathematical machinery needed to work in $L^p(T), 1<p<\infty,$ was developed in \cite{JensensGrobler} even though such spaces were not considered there. Again these spaces are $R(T)$-modules with associated 
$R(T)$-valued norms $\|f\|_{p,T}:=(T|f|^p)^{1/p}$. 
We note that in \cite{KRW} the $R(T)$-module $L^\infty(T)=\{x\in L^{1}(T): |x|\le y \mbox{ for some } y\in R(T)\}$ was considered, with $R(T)$-valued norm $\|f\|_{\infty,T}:=\inf \{ y\in R(T)_+ : |f|\le y\}$. Here we have that $L^\infty(T)$ is an $f$-algebra order dense in 
$L^1(T)$ and having $L^\infty(T)\subset L^p(T)\subset L^1(T)$ for all $1<p<\infty$. 
Regarding $L^p$ type spaces we also note the work of Boccuto, Candeloro and Sambucini in \cite{BCS}.
In Section 3, we generalize Clarkson's inequality to $L^p(T), 1\le p\le 2$.

A filtration on a Dedekind complete Riesz space $E$ with weak order unit is a family of conditional expectation operators $(T_{i})_{i\in\mathbb{N}}$ defined on $E$ having $T_{i}T_{j}=T_{j}T_{i}=T_{i}$ for all $i<j$. 
We say that a sequence of elements $(f_{i})_{i\in\mathbb{N}}$ in a Riesz space is adapted to a filtration 
$(T_{i})_{i\in\mathbb{N}}$ if $f_{i}\in {R}(T_{i})$ for all $i\in\N$. 
A sequence $(f_{i})_{i\in\mathbb{N}}$ is said to be predictable if $f_{i}\in {R}(T_{i-1})$ for each $i\in\mathbb{N}$. 
A Riesz space (sub, super) martingale is a double sequence $(f_{i},T_{i})_{i\in\mathbb{N}}$ with $(f_{i})_{i\in\mathbb{N}}$ adapted to the filtration  $(T_{i})_{i\in\mathbb{N}}$ and $T_{i}f_{j} (\geq, \leq)=f_{i}$ for $i<j$. The fundamentals of such processes can be found in \cite{KLW1,KLW2,KLW3} as well as their continuous time versions in \cite{JJG-1, JJG-2}. 
In Section 4, we give the H\'ajek-R\'enyi-Chow maximal inequality for Riesz space submartingales, see \cite[Theorem 1]{ChowIneqAndLLN} and \cite[Proposition (6.1.4)]{EAndSBook} for measure theoretic versions. 
The H\'ajek-R\'enyi-Chow maximal inequality for submartingales has as a special case Doob's maximal inequality. We note that maximal inequalities have been obtained for Riesz space positive supermartingales in \cite[Lemma 3.1]{QuadVarGrobler} and for Riesz space quasi-martingales in \cite[Theorem 6.2.10]{Vardy}. The H\'ajek-R\'enyi-Chow maximal inequality for submartingales is applied, in Theorem \ref{SubmartingaleConvergence}, to non-negative submartingales to obtain weighted convergence, via a proof which does not use of upcrossing. 
We note that this theorem can be deduced directly from \cite[Theorem 3.5]{KLW3}, which is, however, based on the Riesz space upcrossing theorem.
For $E$ a Dedekind complete Riesz space with weak order unit, $e$, and $(B_n)$ an increasing sequence of bands in $E$, with associated band projections $(P_n)$, it was proved in \cite{Stoica} that $x_n/b_n\to 0$, in order, as $n\to\infty$ if $x_n\in B_n$ with 
$P_nx_{n+1}=x_n$ and $|x_{n+1}-x_n|\le c_n e$, for all $n\in\N$.
Here $c_n>0$ and $0<b_n\uparrow\infty$,  for all $n\in\N$, with
$\displaystyle{\frac{1}{b_n}\left(\sum_{i=1}^n c_i^2\right)^{1/2}\to 0}$ as $n\to\infty$.
In Section 5, we conclude by giving Chow's strong law of large numbers in $L^p(T), 1<p<\infty$, see \cite{ChowIneqAndLLN, ChowLLNPBig} and \cite[Theorems 6.1.8 and 6.1.9]{EAndSBook} for measure versions. 

We note that, for Riesz space processes, a strong law of large numbers for ergodic processes was given in \cite{KLW4}, a weak law of large number for mixingales in \cite{KVW1} and Bernoulli's law of large numbers in \cite{KVW2}.

\section{Weighted Ces\`aro means}

In this section we give a version of Kronecker's Lemma for weighted Ces\`aro means in an Archimedean Riesz space.

\begin{lem}\label{CesaroConvergence}
 Let $E$ be an Archimedean Riesz space and $(s_{n})$ be a sequence in $E_+$ order convergent to $0$. If $b_{n}$ is a non-decreasing sequence of non-negative real numbers divergent to $+\infty$, then $\frac{1}{b_{n}}\sum_{i=1}^{n-1}(b_{i+1}-b_{i})s_{i}$ converges to zero in order as $n\to\infty$. 
\end{lem}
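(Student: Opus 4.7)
The plan is to reduce to a deterministic monotone dominator via the definition of order convergence, and then run a Kronecker-type split-and-shrink argument in which the cut-point is coupled diagonally to $n$ through the real sequence $b_n$. Since $(s_n)$ is order convergent to $0$ in the Archimedean Riesz space $E$, by definition there exists $(p_n)\subset E_+$ with $p_n\downarrow 0$ and $0\le s_n\le p_n$. Setting
\[
t_n:=\frac{1}{b_n}\sum_{i=1}^{n-1}(b_{i+1}-b_i)p_i,
\]
it suffices to show that $t_n\to 0$ in order, since the sum in the statement is bounded above by $t_n$ and below by $0$.

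For any $1\le k<n$ I would split the sum at $k$, apply $p_i\le p_1$ on the head and $p_i\le p_k$ on the tail, and use the telescoping identities $\sum_{i=1}^{k-1}(b_{i+1}-b_i)=b_k-b_1\le b_k$ and $\sum_{i=k}^{n-1}(b_{i+1}-b_i)=b_n-b_k\le b_n$ to arrive at the Riesz-space analogue of the classical Kronecker estimate
\[
0\le t_n\le \frac{p_1\, b_k}{b_n}+p_k.
\]

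To exploit this bound I would diagonalise by setting $k_n:=\max\{k:b_k\le\sqrt{b_n}\,\}$, so that $k_n$ is nondecreasing, $k_n\to\infty$ (since $b_n\to\infty$), and $b_{k_n}/b_n\le 1/\sqrt{b_n}\to 0$. Defining the decreasing real sequence $\alpha_n:=\sup_{m\ge n}b_{k_m}/b_m$, which also tends to $0$, one obtains
\[
0\le t_n\le \alpha_n\, p_1+p_{k_n}
\]
for all sufficiently large $n$. Both terms on the right are decreasing sequences in $E_+$ with infimum $0$: $\alpha_n p_1\downarrow 0$ by the Archimedean property applied to the real scalars $\alpha_n\downarrow 0$ and the fixed element $p_1$, and $p_{k_n}\downarrow 0$ because $p_m\downarrow 0$ and $k_n$ is nondecreasing with $k_n\to\infty$. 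Hence $t_n\to 0$ in order, which gives the lemma.

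The main obstacle is that $E$ is only assumed Archimedean, not Dedekind complete, so one cannot run the one-line scalar argument of fixing $k$, passing to $\limsup_n t_n\le p_k$ in $E$, and then letting $k\to\infty$, because these order-suprema need not exist. The workaround is to convert the two-parameter upper bound in $(n,k)$ into a one-parameter bound in $n$ by the explicit real coupling $k=k_n$, so that an honest decreasing dominator for $t_n$ can be written down directly using only the Archimedean property.
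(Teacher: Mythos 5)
Your proof is correct, and while it shares the paper's key idea --- a cut-point coupled diagonally to $n$ through the scalar sequence $(b_n)$, so that an explicit decreasing dominator can be written down (exactly what the lack of Dedekind completeness forces, as you observe) --- the decomposition is genuinely different. The paper first performs an Abel summation, rewriting $\sum_{i=1}^{n-1}(b_{i+1}-b_i)v_i$ as $b_nv_{n-1}-b_1v_1+\sum_{i=2}^{n-1}b_i(v_{i-1}-v_i)$, and only then splits at $N_n:=\max\{j:\ j^2b_j\le b_n\}$, arriving at the three-term dominator $v_{n-1}+\frac{1}{N_n}v_1+v_{N_n}$; the choice of $N_n$ is tuned so that the head term $\frac{N_nb_{N_n}}{b_n}v_1\le\frac{1}{N_n}v_1$ is manifestly a null scalar multiple of a fixed vector. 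You instead split the original sum directly at $k_n:=\max\{k:\ b_k\le\sqrt{b_n}\}$, telescoping only the scalar increments $b_{i+1}-b_i$ and using the monotonicity of $(p_i)$ on each piece, which yields the cleaner two-term dominator $\alpha_np_1+p_{k_n}$ with $\alpha_n=\sup_{m\ge n}b_{k_m}/b_m\downarrow 0$ handled by the Archimedean property. Your route avoids the summation by parts entirely and is arguably more transparent. The only points worth making explicit are that $k_n$ is well defined and satisfies $k_n<n$ once $b_n>\max\{1,b_1^2\}$ (so the split is legitimate for large $n$), and that the sum of your two decreasing null sequences is again a decreasing null sequence; both are routine.
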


\begin{proof}  
 By the order convergence of $(s_{n})$ to $0$, there is sequence $(v_{n})$ in $E$ such that $s_{n}\leq v_{n}\downarrow 0$, for $n\in\mathbb{N}$.
 As $$0\le \frac{1}{b_{n}}\sum_{i=1}^{n-1}(b_{i+1}-b_{i})s_{i}\le \frac{1}{b_{n}}\sum_{i=1}^{n-1}(b_{i+1}-b_{i})v_{i}=:z_n,$$ it suffices to show that
 $z_n\to 0$ in order.
 For $n\in \N$, let $N_n:=\max\{j\in\N\,|\,j^2b_j\le b_n\}$, then $(N_n)$ is a non-decreasing sequence in $\N$ with $N_n\to\infty$ as $n\to\infty$ 
 and $N_n< n$ for $n\ge 2$.
 Now, for $n\ge 2$,
 \begin{eqnarray*}
 z_n&=&\frac{1}{b_n}\left(b_nv_{n-1}-b_1v_1+\sum_{i=2}^{N_n}b_i(v_{i-1}-v_i)+\sum_{i=N_n+1}^{n-1}b_i(v_{i-1}-v_i)  \right)\\
 &\le& v_{n-1}+\frac{1}{b_n}\left(\sum_{i=2}^{N_n}b_{N_n}v_1+\sum_{i=N_n+1}^{n-1}b_{n-1}(v_{i-1}-v_i)\right)\\
 &\le& v_{n-1}+\frac{N_nb_{N_n}}{b_n}v_1+\frac{b_{n-1}}{b_n}v_{N_n}
 \le v_{n-1}+\frac{1}{N_n}v_1+v_{N_n}\downarrow 0.\quad \qed
 \end{eqnarray*}
\end{proof}

In \cite[lemma 3.14]{IsraelRef}, this result was proved for the case of $b_n=n, n\in\N$.

\begin{lem}[Kronecker's Lemma]\label{LemKron}
	Let $(x_{n})$ be a summable sequence of elements in an Archimedean Riesz space $E$. Let $(b_{n})_{n\in\mathbb{N}}$ be a non-decreasing sequence of non-negative real numbers divergent to $+\infty$. Then $\displaystyle{\frac{1}{b_{n}}\sum_{i=1}^{n}b_{i}x_{i}\to 0}$ in order as $n\to \infty$.
\end{lem}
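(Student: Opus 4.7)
The plan is to reduce Kronecker's Lemma to Lemma \ref{CesaroConvergence} via Abel summation. First I would introduce the tail sums: since $(x_n)$ is summable, the partial sums $S_n=\sum_{i=1}^n x_i$ order converge in $E$ to some $S$, so setting $s_n:=S-S_{n-1}$ (with $s_1:=S$) yields a sequence in $E$ with $s_n\to 0$ in order and $x_i=s_i-s_{i+1}$.

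Next I would perform Abel summation on $\sum_{i=1}^n b_i x_i=\sum_{i=1}^n b_i(s_i-s_{i+1})$ and reindex the second sum to obtain
\begin{equation*}
\sum_{i=1}^n b_i x_i = b_1 s_1 - b_n s_{n+1}+\sum_{i=2}^n (b_i-b_{i-1})s_i,
\end{equation*}
and then divide through by $b_n$:
\begin{equation*}
\frac{1}{b_n}\sum_{i=1}^n b_i x_i = \frac{b_1}{b_n}s_1 - s_{n+1}+\frac{1}{b_n}\sum_{i=2}^n (b_i-b_{i-1})s_i.
\end{equation*}
The first term tends to $0$ in order because $b_n\to\infty$ and $s_1$ is a fixed element of an Archimedean Riesz space, and the second term tends to $0$ in order by the tail convergence $s_n\to 0$.

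For the third, Cesàro-type, term I would pick a sequence $v_n\downarrow 0$ in $E$ dominating $|s_n|$ (available from the order convergence $s_n\to 0$). Then
\begin{equation*}
\left|\frac{1}{b_n}\sum_{i=2}^n (b_i-b_{i-1})s_i\right|\le \frac{1}{b_n}\sum_{i=2}^n (b_i-b_{i-1})v_i = \frac{1}{b_n}\sum_{j=1}^{n-1}(b_{j+1}-b_j)v_{j+1},
\end{equation*}
after the reindexing $j=i-1$. Since $(v_{j+1})\subset E_+$ order converges to $0$ and $(b_n)$ satisfies the hypotheses of Lemma \ref{CesaroConvergence}, the right-hand side goes to $0$ in order. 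Combining the three estimates gives the claim.

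The only subtlety, and the main thing to be careful about, is that Lemma \ref{CesaroConvergence} is stated for a non-negative sequence with weights $(b_{i+1}-b_i)$ starting from index $i=1$; the Abel-summation tail arrives with weights $(b_i-b_{i-1})$ starting at $i=2$ and a generally signed $s_i$. Both issues are purely cosmetic: the index shift matches the two formulations, and the signed $s_i$ is replaced by the dominating decreasing positive sequence $v_i$ before invoking the lemma. Beyond that the argument is routine.
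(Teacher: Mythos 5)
Your proposal is correct and follows essentially the same route as the paper: Abel summation against the tail sums $s_n$ of the series, followed by an application of Lemma \ref{CesaroConvergence} to the resulting weighted Ces\`aro term (the paper applies that lemma directly to $|s_i|$ rather than to a dominating sequence $v_i$, and indexes its tails from $n+1$ instead of $n$, but these are only cosmetic differences). The index-shift and positivity issues you flag are indeed harmless, exactly as you say.
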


\begin{proof}
 Let $\displaystyle{s_n:=\sum_{i=n+1}^\infty x_i, n=0,1,2,\dots}$, then $s_n\to 0$ in order and
 \begin{eqnarray*}
 \left|\frac{1}{b_n}\sum_{i=1}^n b_ix_i\right| =\frac{1}{b_n}\left|b_ns_n-b_1s_0-\sum_{i=1}^{n-1} (b_{i+1}-b_i)s_i\right|
 \le |s_n|+\frac{b_1}{b_n}|s_0|+\frac{1}{b_n}\sum_{i=1}^{n-1} (b_{i+1}-b_i)|s_i|
 \end{eqnarray*}
 which converges to zero in order by Lemma \ref{CesaroConvergence}.
\qed 
\end{proof}


\section{Inequalities}

The inequalities presented in this section form the foundation on which much of the rest of this paper is based.

Taking the product Riesz space $\mathcal{K}=[L^{1}(T)]^n$ with componentwise ordering and defining $\mathbb{F}(x_i)_{i=1}^n=(\frac{1}{n}\sum_{j=1}^n x_j)_{i=1}^n$ 
we have that $\mathbb{F}$ is a conditional expectation operator on $\mathcal{K}$. Hence from \cite[Corollary 6.4]{JensensGrobler} or \cite[Theorem 3.7]{LpSpacesAz} we have the following theorem.

\begin{thm}[H\"older's inequality for sums]\label{ThmHolders}
	Let $T$ be a conditional expectation with natural domain $L^1(T)$ and $1\le p,q\le \infty$ with $\frac{1}{p}+\frac{1}{q}=1$ and $p=1$ for $q=\infty$. Let $n\in\mathbb{N}$. If $x_i\in L^p(T)$ and $y_i\in L^q(T)$ for all $i\in\{1,2,...,n\}$, then $x_{i}y_i \in L^1(T)$ for each $i$, and 
	$$\sum^{n}_{i=1}T|x_{i}y_{i}|\leq\left(\sum^{n}_{i=1}T|x_{i}|^{p}\right)^{\frac{1}{p}}\left(\sum^{n}_{i=1}T|y_{i}|^{q}\right)^{\frac{1}{q}}.$$
\end{thm}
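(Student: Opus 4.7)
The plan is to realise the stated inequality as a single coordinate of the ordinary conditional H\"older inequality, applied to a suitably chosen conditional expectation on the product Riesz space. First, equip $\mathcal{K}=[L^1(T)]^n$ with componentwise order, weak order unit $(e,\ldots,e)$, and componentwise functional calculus, so that $|X|^p=(|x_i|^p)_{i=1}^n$ for $X=(x_i)$. Let $\tilde T$ denote the coordinatewise lift $\tilde T(x_i)_i=(Tx_i)_i$ and compose it with the averaging operator $\mathbb{F}$ from the paragraph preceding the theorem to obtain
$$
\mathbb{T}:=\mathbb{F}\tilde T:(x_i)_{i=1}^n\longmapsto\Big(\tfrac{1}{n}\sum_{j=1}^nTx_j\Big)_{i=1}^n.
$$
A short check shows that $\mathbb{T}$ is linear, positive, order continuous, idempotent, and sends the weak order unit $(e,\ldots,e)$ of $\mathcal K$ to $(Te,\ldots,Te)$, which is again a weak order unit; hence $\mathbb{T}$ is a conditional expectation on $\mathcal K$, with range the diagonal copy of $R(T)$.

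Next, I would invoke the conditional H\"older inequality for a Riesz-space conditional expectation, namely \cite[Corollary 6.4]{JensensGrobler} or \cite[Theorem 3.7]{LpSpacesAz}, applied to $\mathbb{T}$: for $X=(x_i)$ and $Y=(y_i)$ in $\mathcal K$ with $x_i\in L^p(T)$ and $y_i\in L^q(T)$, one has $X\in L^p(\mathbb{T})$, $Y\in L^q(\mathbb{T})$, and
$$
\mathbb{T}|XY|\le(\mathbb{T}|X|^p)^{1/p}(\mathbb{T}|Y|^q)^{1/q}.
$$
Every coordinate of $\mathbb{T}|XY|$ equals $\tfrac{1}{n}\sum_{i=1}^nT|x_iy_i|$, every coordinate of $\mathbb{T}|X|^p$ equals $\tfrac{1}{n}\sum_{i=1}^nT|x_i|^p$, and similarly for $Y$. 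Reading one coordinate of this inequality and multiplying through by $n$ produces a right-hand factor $n\cdot n^{-1/p}\cdot n^{-1/q}=n^{1-(1/p+1/q)}=1$, since $\tfrac{1}{p}+\tfrac{1}{q}=1$, and this yields exactly the asserted inequality; the integrability $x_iy_i\in L^1(T)$ for each $i$ falls out of the same bound taken at a single index.

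The boundary case $p=1$, $q=\infty$ I would handle directly from the definition of $\|\cdot\|_{\infty,T}$: the bound $|y_i|\le\|y_i\|_{\infty,T}\,e$ together with the module action of $R(T)$ on $L^1(T)$ yields $T|x_iy_i|\le\|y_i\|_{\infty,T}\,T|x_i|$, and summing over $i$ closes this case under the natural convention that the $q=\infty$ factor stands for $\max_{i}\|y_i\|_{\infty,T}$. The only genuine obstacle I foresee is confirming that the functional calculus $x\mapsto x^p$ and the $L^p$-domain conventions transfer cleanly from $E^u$ to the universal completion $(E^u)^n$ of $\mathcal K$, so that the H\"older inequality for $\mathbb{T}$ really applies in the form written; once that is granted, the rest is coordinatewise bookkeeping.
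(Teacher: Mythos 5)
Your proposal is correct and is essentially the paper's own argument: the paper derives the theorem by viewing $\mathbb{F}$ on $\mathcal{K}=[L^1(T)]^n$ as a conditional expectation and invoking the abstract H\"older inequality of \cite{JensensGrobler} or \cite{LpSpacesAz}. Your explicit composition $\mathbb{T}=\mathbb{F}\tilde T$ and the coordinatewise bookkeeping (including the cancellation of the powers of $n$ and the separate $p=1,\ q=\infty$ case) simply spell out carefully what the paper leaves implicit in its two-sentence justification.
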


From \cite[page 809]{LpSpacesAz} we have that
	$$|x+y|^{p}+|x-y|^{p}\leq2^p(|x|^{p}+|y|^{p})$$
for $1<p<\infty$ with $x,y\in E^u$. This inequality, however, is inadequate for our purposes and we require a refined version, i.e. the Clarkson's inequalities for $1<p<2$.
To this end we follow the approach of Ramaswamy \cite{Ramaswamy}.

\begin{thm}[Clarkson's Inequality]\label{FinallyProvedInequality}
	Let $E$ be a Dedekind complete Riesz space with weak order unit, say $e$ which we take as the multiplicative unit in the $f$-algebra $E^u$.
        For $x,y\in E^u$ and $1\le p\le 2$ we have
     \begin{equation}\label{clarkson}
       |x+y|^{p}+|x-y|^{p}\leq 2(|x|^{p}+|y|^{p}).
     \end{equation}
\end{thm}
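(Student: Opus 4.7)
The plan is to follow the classical scheme for Clarkson's inequality with $1\le p\le 2$: combine the parallelogram identity with a concavity inequality for $t\mapsto t^{p/2}$, then close with a comparison of $\ell^p$ norms. The $f$-algebra structure of $E^u$ is what allows the scalar inequalities to be transported into $E^u$ through functional calculus as developed in \cite{JensensGrobler,LpSpacesAz}.

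First, since $E^u$ is a commutative $\R$-algebra in which $x^2=|x|^2$ for every element, the parallelogram identity
\begin{equation*}
(x+y)^2+(x-y)^2=2x^2+2y^2=2(|x|^2+|y|^2)
\end{equation*}
holds by direct expansion. Setting $a=|x+y|^2$ and $b=|x-y|^2$, the left-hand side of \eqref{clarkson} equals $a^{p/2}+b^{p/2}$ and $a+b=2(|x|^2+|y|^2)$. Next I would invoke the scalar concavity inequality $s^{p/2}+t^{p/2}\le 2^{1-p/2}(s+t)^{p/2}$, valid for $s,t\ge 0$ since $p/2\in[1/2,1]$, and lift it through functional calculus on the $f$-algebra $E^u$ to the commuting positive elements $a,b$. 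This produces
\begin{equation*}
|x+y|^p+|x-y|^p\le 2^{1-p/2}\bigl(2(|x|^2+|y|^2)\bigr)^{p/2}=2(|x|^2+|y|^2)^{p/2}.
\end{equation*}
Finally I would use the scalar inequality $(u^2+v^2)^{p/2}\le u^p+v^p$ for $u,v\ge 0$ and $1\le p\le 2$ (which reduces, after normalisation, to $u,v\in[0,1]$ and the fact that $u^p\ge u^2$, $v^p\ge v^2$ there), applied via functional calculus to $u=|x|$, $v=|y|$, to deduce $(|x|^2+|y|^2)^{p/2}\le |x|^p+|y|^p$; combining gives \eqref{clarkson}.

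The main obstacle is justifying the two transport steps: both scalar inequalities involve two commuting positive elements, so one needs a multivariable functional-calculus argument rather than single-variable Freudenthal spectral calculus. In the $f$-algebra $E^u$ this should be routine via the results in \cite{JensensGrobler,LpSpacesAz} (Jensen-type inequalities for conditional expectations there already encode a form of operator concavity), but if a purely elementary route is preferred, each inequality can be reduced to a univariate one by working on the band where $a+b>0$ (respectively $|x|^2+|y|^2>0$), dividing out, and handling the complementary band separately via the associated band projection.
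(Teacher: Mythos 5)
Your proposal is correct and follows essentially the same route as the paper: the same parallelogram identity, the same concavity step yielding $|x+y|^{p}+|x-y|^{p}\leq 2(|x|^{2}+|y|^{2})^{p/2}$, and the same comparison $(|x|^{2}+|y|^{2})^{p/2}\leq |x|^{p}+|y|^{p}$. The transport step you flag as the main obstacle is resolved in the paper exactly as you anticipate: both two-variable scalar inequalities are encoded as Jensen's inequality (with $g(X)=X^{2/p}$) and the $\infty$-case of H\"older's inequality for the two-point averaging conditional expectation $\mathbb{F}(X,Y)=(\frac{1}{2}(X+Y),\frac{1}{2}(X+Y))$ on $E^{u}\times E^{u}$, so no separate multivariable functional calculus is needed.
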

\begin{proof}
For $p=1$ the result follows from the triangle inequality while
for $p=2$ the result follows from $|f|^2=f^2$, so we now 
consider only $1<p<2$.
 Taking $g(X)=X^{2/p}$ and $\mathbb{F}(X,Y)=(\frac{1}{2}(X+Y),\frac{1}{2}(X+Y))$ in Jensen's inequality of \cite{JensensGrobler} on the Riesz space 
  $F:=E^u\times E^u$ with componentwise ordering, we have that $\mathbb{F}(g(|a|^p,|b|^p))\ge g(\mathbb{F}(|a|^p,|b|^p))$ so
  $|a|^2+|b|^2\ge 2^{(p-2)/p}(|a|^p+|b|^p)^{2/p}$ and $2^{(2-p)/2}(|a|^2+|b|^2)^{p/2}\ge |a|^p+|b|^p$. Setting $a=x+y$ and $b=x-y$ we have
  \begin{equation}\label{clarkson-1}
    |x+y|^{p}+|x-y|^{p}\leq 2^{(2-p)/2}((x+y)^2+(x-y)^2)^{p/2}=2(x^2+y^2)^{p/2}.
  \end{equation}
  We now apply the $\infty$ case of H\"older's inequality of \cite{JensensGrobler} on the space $F$ with conditional expectation $\mathbb{F}$ as above to get
  \begin{equation}\label{clarkson-2}
    \mathbb{F}((|x|^p,|y|^p)(|x|^{2-p},|y|^{2-p}))\le \mathbb{F}((|x|^p,|y|^p))(|x|^{2-p}\vee |y|^{2-p},|x|^{2-p}\vee |y|^{2-p}).
  \end{equation}
   Here $|x|^{2-p}\vee |y|^{2-p}=(|x|^p\vee |y|^p)^{(2-p)/p}\le (|x|^p + |y|^p)^{(2-p)/p}$ by the commutation of multiplication and band projections in the $f$-algebra $E^u$.
   Hence from (\ref{clarkson-2}) we get
   $$\frac{1}{2}(x^2+y^2)\le \frac{1}{2} (|x|^p+|y|^p)(|x|^p + |y|^p)^{(2-p)/p}=\frac{1}{2} (|x|^p+|y|^p)^{2/p},$$
  which when combined with (\ref{clarkson-1}) gives (\ref{clarkson}).
\qed
\end{proof}

The strong law of large numbers for $p>2$ will make use of Riesz space versions of Burkholder's inequality, \cite[Theorem 16]{BurkholdersAz}, which we give here for completeness.
\begin{thm}[Burkholder's inequality]\label{ThmBurkholders}
 For $1< p<\infty$, there are constants $c_p, C_p>0$ such that
 $$C_pT|X_n|^p \leq T|S_n^\frac{1}{2}|^p \leq c_p T|X_n|^p,$$
 for each $(X_{n},T_n)_{n\in\mathbb{N}}$ a martingale in $L^p(T)$ compatible with $T$, i.e. $TT_n=T=T_nT$, for all $\in\N$.
 Here $\displaystyle{S_n:=\sum^{n}_{i=1}(X_{i}-X_{i-1})^2}$ and $X_0:=0$.
\end{thm}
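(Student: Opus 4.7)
The plan is to adapt the classical Burkholder argument in the manner of Azouzi \cite{BurkholdersAz}, exploiting the $f$-algebra structure of $E^u$ and the $R(T)$-module inequalities developed in Section 3, together with the maximal inequality of Section 4. Let $d_i := X_i - X_{i-1}$ denote the martingale differences, so that $T_{i-1} d_i = 0$; the compatibility hypothesis $T T_n = T = T_n T$ additionally gives $T d_i = 0$ and makes $T$ behave compatibly with the filtration.

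I would first dispose of the case $p = 2$ by direct orthogonality. For $i > j$ one has $d_j \in R(T_j) \subset R(T_{i-1})$, so by the $f$-algebra averaging property $T_{i-1}(d_i d_j) = d_j T_{i-1}(d_i) = 0$, whence $T(d_i d_j) = 0$ and
\begin{equation*}
T|X_n|^2 = \sum_{i=1}^n T d_i^2 = T S_n,
\end{equation*}
giving $C_2 = c_2 = 1$. For $1 < p < 2$, the upper bound $T|S_n^{1/2}|^p \le c_p T|X_n|^p$ should follow by iterating Clarkson's inequality (Theorem \ref{FinallyProvedInequality}) on the telescoping identity $X_i = X_{i-1} + d_i$ and controlling the resulting maximal element through a Doob-type estimate extracted from the H\'ajek-R\'enyi-Chow maximal inequality of Section 4. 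The reverse bound $C_p T|X_n|^p \le T|S_n^{1/2}|^p$ is then obtained by pairing $X_n$ against a suitable element of $L^q(T)$ with $1/p + 1/q = 1$, via H\"older's inequality (Theorem \ref{ThmHolders}), using orthogonality of the differences to reduce the estimate to an $L^2(T)$ computation. For $p > 2$ the same duality argument transfers both bounds from the range $1 < q < 2$ where they are already known.

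The main obstacle I anticipate is the good-$\lambda$ step of the classical Burkholder argument, which rests on stopping-time truncations. In the Riesz space setting, stopping times must be represented by increasing sequences of band projections compatible with the filtration (in the sense of \cite{QuadVarGrobler}); one must verify that the stopped processes remain martingales and that these projections commute with the $T_i$ and with the $f$-algebra multiplication used to form $S_n^{1/2}$ via functional calculus. This is precisely the point at which one invokes the technical machinery of Azouzi \cite{BurkholdersAz} to complete the proof, which is why the authors state the result here only for completeness.
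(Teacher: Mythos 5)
The paper does not prove this theorem at all: it is quoted from Azouzi and Ramdane, \cite[Theorem 16]{BurkholdersAz}, explicitly ``for completeness,'' and is used later only as a black box in the proof of Theorem \ref{InequalityForPOver2}. So there is no proof in the paper to compare yours against, and your closing observation --- that one must ultimately invoke the machinery of \cite{BurkholdersAz} --- correctly identifies the authors' intent.

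Taken as a proof, however, your outline has genuine gaps beyond that deferral. The $p=2$ orthogonality computation is fine, and is the one part that really is elementary in this setting. But for $1<p<2$ the claim that the upper bound $T|S_n^{1/2}|^p\le c_pT|X_n|^p$ ``should follow by iterating Clarkson's inequality'' is not an argument: inequality (\ref{clarkson}) compares $|x+y|^p+|x-y|^p$ with $|x|^p+|y|^p$, and iterated along the telescoping sum $X_i=X_{i-1}+d_i$ it yields at best a von Bahr--Esseen type bound $T|X_n|^p\le 2\sum_iT|d_i|^p$ involving the $p$-th powers of the differences; the quantity $\bigl(\sum_i d_i^2\bigr)^{p/2}$ never appears, so the square function is not controlled. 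The duality step for the reverse bound likewise needs the $R(T)$-valued pairing $(f,g)\mapsto T(fg)$ together with a norming/density argument in $L^q(T)$ that is not available off the shelf and is not sketched. Finally, the good-$\lambda$/stopping-time step that you flag as ``the main obstacle'' is precisely the substantive content of \cite[Theorem 16]{BurkholdersAz}; naming it does not discharge it. In short, your instinct to cite the result is right and matches the paper's treatment exactly, but the surrounding sketch should not be presented as if it were a proof.
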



\newsection{H\'ajek-R\'enyi-Chow maximal inequality}

We now recall some well known results regarding band projections on a Dedekind complete Riesz space, $E$, with a weak order unit, say $e$.
If $g\in E_+$ we denote the band projection onto the band generated by $g$ by $P_g$. In this setting every band is a principal band 
and if $B$ is a band in $E$ with band projection $Q$ onto $E$ then a generator of the band is $Qe$. Moreover for $f\in E_+$ we have
$\displaystyle{P_gf=\sup_{n\in\N} (f\wedge (ng))}$, see \cite[Theorem 11.5]{ZaanenBook}. Further if $(f_n)$ is a sequence in $E_+$ then
\begin{eqnarray}\label{LemBandProjLeftCts}
 \bigvee_{n=1}^{\infty}P_{f_n}=P_{\bigvee_{n=1}^{\infty}f_n}
\end{eqnarray}
since
 $$P_{\bigvee_{n=1}^{\infty}f_n}e = \bigvee_{m=1}^\infty\left(e\wedge m\left(\bigvee_{n=1}^{\infty}f_n\right)\right)
	  	 = \bigvee_{m,n=1}^{\infty}(e\wedge m f_n)= \bigvee_{n=1}^{\infty}P_{f_n}e.$$
We note however that for the case of infima only the following inequality can be assured 
\begin{equation}\label{InterestingOnInfSide}
 \bigwedge_{n=1}^{\infty}P_{f_n}\ge P_{\bigwedge_{n=1}^{\infty}f_n}.
\end{equation}
For reference we note that if $(g_n)$ is a sequence in $E$ then
$0\le P_{\bigwedge_{n=1}^{\infty}g_n^-}g_m^{+}\le P_{g_m^-}g_m^{+}=0$ giving
$$(I-P_{\bigwedge_{n=1}^{\infty}g_n^-})\bigvee_{m=1}^{\infty}g_m^{+}=\bigvee_{m=1}^{\infty}(I-P_{\bigwedge_{n=1}^{\infty}g_n^-})g_m^{+}=
\bigvee_{m=1}^{\infty}g_m^{+}.$$
Hence	 	
\begin{equation}\label{LemmaExclEqualsEqn}
 P_{\bigvee_{n=1}^{\infty}g_n^{+}}\leq I-P_{\bigwedge_{n=1}^{\infty}g_n^-}.
\end{equation}

Using telescoping series we generalise \cite[Lemma (6.1.1)]{EAndSBook} to vector lattices.
\begin{lem}\label{LemmaFirst} 
	Let $E$ be a Dedekind complete Riesz space weak order unit $e$. Let $(X_{i})\subset E$ be a sequence in $E$ and $g\in E$.
 Let $P_{i}:=P_{(g-X_{i})^{+}}, i\in\N,$ be the band projection onto the band generated by $(g-X_{i})^{+}$,
 then
\begin{eqnarray} (I-Q_n)g\leq X_{1}+\sum^{n-1}_{i=1}[Q_{i}(X_{i+1}-X_{i})] - Q_{n}X_{n},
 \label{pre-RHC}
\end{eqnarray}
where $\displaystyle{Q_{n}:=\prod_{j=1}^n P_j=P_{\left(g-\bigvee_{j=1}^n X_{j}\right)^{+}}}, n\in\N.$
\end{lem}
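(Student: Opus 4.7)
The plan is to rewrite the right-hand side of (\ref{pre-RHC}) by summation by parts so that the entire inequality collapses to a telescoping sum, and then to exploit the structure of the band projections $P_i = P_{(g-X_i)^+}$ to obtain the required pointwise estimate on each summand. Setting $Q_0 := I$, one checks directly that
$$X_1 + \sum_{i=1}^{n-1} Q_i(X_{i+1}-X_i) - Q_n X_n = \sum_{i=1}^n (Q_{i-1}-Q_i) X_i.$$
Since band projections on a Dedekind complete Riesz space commute and $Q_i = Q_{i-1}P_i$, we have $Q_{i-1}-Q_i = Q_{i-1}(I-P_i)$, so the task reduces to comparing $(Q_{i-1}-Q_i)X_i$ with $(Q_{i-1}-Q_i)g$.

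The key step will be to show that $(I-P_i)X_i \ge (I-P_i)g$ for each $i$. Because $P_i$ is the band projection onto the band generated by $(g-X_i)^+$, its complement $I-P_i$ is the band projection onto the disjoint complement, which is exactly the band generated by $(g-X_i)^-$. Consequently $(I-P_i)(g-X_i)^+=0$ and $(I-P_i)(g-X_i)^- = (g-X_i)^-$, giving
$$(I-P_i)(X_i-g) = (g-X_i)^- \ge 0.$$
Applying the positive operator $Q_{i-1}$ and summing telescopes to
$$\sum_{i=1}^n (Q_{i-1}-Q_i)X_i \;\ge\; \sum_{i=1}^n (Q_{i-1}-Q_i)g \;=\; (Q_0-Q_n)g \;=\; (I-Q_n)g,$$
which is precisely (\ref{pre-RHC}).

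To finish, I will verify the displayed identity $Q_n = P_{(g-\bigvee_{j=1}^n X_j)^+}$ used in the statement. The distributive law $(u\wedge v)\vee w = (u\vee w)\wedge(v\vee w)$ applied with $w=0$ yields $(a\wedge b)^+ = a^+\wedge b^+$, so by induction $\bigwedge_{i=1}^n (g-X_i)^+ = (g-\bigvee_{i=1}^n X_i)^+$. For positive generators in a finite collection, the infimum of the band projections coincides with the product and with the band projection onto the infimum of the generators; this gives $\prod_{j=1}^n P_j = P_{\bigwedge_{j=1}^n (g-X_j)^+} = P_{(g-\bigvee_{j=1}^n X_j)^+}$.

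There is no real analytic obstacle here; the whole proof is a combination of a summation-by-parts identity with the characterisation of disjoint band complements. The only point to be careful about is not to appeal to the strict inequality warned of in (\ref{InterestingOnInfSide}), which can occur only for infinite infima of band projections; the identification of $Q_n$ above uses only the finite case, where equality does hold.
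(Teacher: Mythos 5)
Your proof is correct and follows essentially the same route as the paper's: both rest on the telescoping identity $X_1+\sum_{i=1}^{n-1}Q_i(X_{i+1}-X_i)-Q_nX_n=\sum_{i=1}^{n}(Q_{i-1}-Q_i)X_i$ together with the observation that $(I-P_i)(g-X_i)=-(g-X_i)^-\le 0$, followed by applying the positive operator $Q_{i-1}$ and summing. One cosmetic slip: the disjoint complement of the band generated by $(g-X_i)^+$ contains, but need not equal, the band generated by $(g-X_i)^-$; however, the identities $(I-P_i)(g-X_i)^+=0$ and $(I-P_i)(g-X_i)^-=(g-X_i)^-$ that you actually use are correct, so nothing is lost.
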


\begin{proof}
  Let $Q_{0}:=I$. From the definition of $P_i$ we have $P_j(g-X_j)=(g-X_j)^+$ and thus
  $(I-P_j)(g-X_j)=-(g-X_j)^-\le 0$. However $Q_{j-1}-Q_j=Q_{j-1}(I-P_j)$, so applying $Q_{j-1}$ to both sides of 
  $(I-P_j)(g-X_j)\le 0$, gives
  $(Q_{j-1}-Q_j)(g-X_j)\le 0$. Hence $(Q_{j-1}-Q_j)g\le (Q_{j-1}-Q_j)X_j$, which when summed over $j=1,\dots,n$ gives (\ref{pre-RHC}).
\qed
\end{proof} 

If $(f_i,T_i)$ is a submartingale in the Riesz space $E$ then so is $(f_i^+,T_i)$. To see this we observe that as $T_j$ a
positive operator and $f_j^+\ge f_j$ so
$T_if_j^+\ge T_if_j\ge f_i$ and as $f_j^+\ge 0$ so  $T_if_j^+\ge 0$, for $i\le j$. Hence $T_if_j^+\ge0\vee f_i=f_i^+$ for $i\le j$.

\begin{thm}[H\'ajek-R\'enyi-Chow maximal inequality]\label{MaximalIneq}
	Let $(Y_i,T_i)_{i\in\mathbb{N}}$ be a submartingale in $L^1(T)$. 
For $(a_{i})_{i\in\mathbb{N}}$ a non-decreasing sequence of positive real numbers and 
 $g\in {R}(T_{1})^{+}$ we have
	\begin{equation}\label{RHC-max}
		 T_{1}(I-U_{n})g\leq \dfrac{Y_{1}^{+}}{a_{1}}+\sum^{n-1}_{i=1}T_{1}\left[\dfrac{Y_{i+1}^{+}-Y_{i}^{+}}{a_{i+1}}\right]
	\end{equation}
where $U_{n}:=\prod_{i=1}^{n}P_{\left(g-\frac{Y_{i}}{a_{i}}\right)^{+}}=P_{\left(g-\bigvee_{i=1}^{n}\frac{Y_{i}}{a_{i}}\right)^{+}}$.
\end{thm}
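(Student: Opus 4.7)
My plan is to apply Lemma \ref{LemmaFirst} to $X_i := Y_i^+/a_i$ rather than $Y_i/a_i$, using the observation immediately preceding the theorem that $(Y_i^+,T_i)$ is itself a submartingale. Setting $\tilde Q_n := P_{(g - \bigvee_{i=1}^n Y_i^+/a_i)^+}$, the lemma yields
\[(I - \tilde Q_n) g \le \frac{Y_1^+}{a_1} + \sum_{i=1}^{n-1} \tilde Q_i\!\left(\frac{Y_{i+1}^+}{a_{i+1}} - \frac{Y_i^+}{a_i}\right) - \tilde Q_n \frac{Y_n^+}{a_n},\]
and I would discard the final non-positive term. Since $a_i \le a_{i+1}$ and $Y_i^+ \ge 0$, the identity $\frac{Y_{i+1}^+}{a_{i+1}} - \frac{Y_i^+}{a_i} = \frac{Y_{i+1}^+ - Y_i^+}{a_{i+1}} - Y_i^+\bigl(\tfrac{1}{a_i} - \tfrac{1}{a_{i+1}}\bigr)$ gives the upper bound $(Y_{i+1}^+ - Y_i^+)/a_{i+1}$ for the bracket, and the positivity of $\tilde Q_i$ lets me replace the $i$-th summand by $\tilde Q_i (Y_{i+1}^+ - Y_i^+)/a_{i+1}$.

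Next I would reconcile $\tilde Q_n$ with the $U_n$ of the statement. As $Y_i \le Y_i^+$ one has $\bigvee_i Y_i/a_i \le \bigvee_i Y_i^+/a_i$, hence $(g - \bigvee_i Y_i^+/a_i)^+ \le (g - \bigvee_i Y_i/a_i)^+$ and thus $\tilde Q_n \le U_n$ as band projections; combined with $g \ge 0$ this yields $(I - U_n) g \le (I - \tilde Q_n) g$, so applying the positive operator $T_1$ reduces the task to bounding $T_1(I - \tilde Q_n) g$ by the right-hand side of (\ref{RHC-max}).

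It remains to apply $T_1$ termwise and remove each $\tilde Q_i$. The generator $(g - \bigvee_{j=1}^i Y_j^+/a_j)^+$ of the band of $\tilde Q_i$ lies in $R(T_i)$ (since $g \in R(T_1) \subseteq R(T_i)$ and $Y_j^+ \in R(T_j) \subseteq R(T_i)$ for $j \le i$), so $\tilde Q_i T_i = T_i \tilde Q_i$. Using $T_1 = T_1 T_i$, $\tilde Q_i \le I$, and the submartingale inequality $T_i(Y_{i+1}^+ - Y_i^+) \ge 0$,
\[T_1\!\left[\tilde Q_i \tfrac{Y_{i+1}^+ - Y_i^+}{a_{i+1}}\right] = T_1 \tilde Q_i T_i \tfrac{Y_{i+1}^+ - Y_i^+}{a_{i+1}} \le T_1 T_i \tfrac{Y_{i+1}^+ - Y_i^+}{a_{i+1}} = T_1 \tfrac{Y_{i+1}^+ - Y_i^+}{a_{i+1}},\]
which together with $T_1(Y_1^+/a_1) = Y_1^+/a_1$ produces (\ref{RHC-max}). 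The main obstacle is precisely the mismatch between the projection produced naturally by Lemma \ref{LemmaFirst} and the $U_n$ in the statement; once the band-projection comparison above is in hand, the rest is Abel summation combined with the tower/commutation identity between $\tilde Q_i$ and $T_i$.
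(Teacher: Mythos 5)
Your proof is correct, and its skeleton coincides with the paper's: apply Lemma \ref{LemmaFirst} to $X_i=Y_i^+/a_i$, use $a_i\le a_{i+1}$ and $Y_i^+\ge 0$ to pass to the differences $(Y_{i+1}^+-Y_i^+)/a_{i+1}$, and then remove the projections under $T_1$ via the tower property $T_1=T_1T_i$, the commutation of $T_i$ with band projections onto bands generated by elements of $R(T_i)_+$, and the submartingale inequality $T_i(Y_{i+1}^+-Y_i^+)\ge 0$ (indeed, you spell out the commutation step $T_1\tilde Q_i=T_1\tilde Q_iT_i$ more explicitly than the paper does). Where you genuinely diverge is in reconciling the projection $\tilde Q_n$ produced by the lemma with the $U_n$ of the statement. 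The paper proves the lattice identity $g\wedge\left(g-\frac{Y_i}{a_i}\right)^+=\left(g-\frac{Y_i^+}{a_i}\right)^+$, deduces $QU_n=QQ_n$ for $Q=P_g$, and then assembles the inequality separately on the band generated by $g$ and on its complement (where both sides reduce to $0\le$ something nonnegative). You instead observe that $\left(g-\bigvee_i Y_i^+/a_i\right)^+\le\left(g-\bigvee_i Y_i/a_i\right)^+$ forces $\tilde Q_n\le U_n$, hence $(I-U_n)g\le(I-\tilde Q_n)g$, so the theorem follows from the formally stronger inequality for $\tilde Q_n$. This is a valid comparison of band projections (for $0\le u\le v$ the band generated by $u$ is contained in that generated by $v$), and it buys you a shorter argument with no case split on the band of $g$; what the paper's identity buys in exchange is the extra information that $U_n$ and $Q_n$ actually agree on the band generated by $g$, so nothing is lost in stating the theorem with $U_n$. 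Both routes are sound.
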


\begin{proof}
 Let $Q=P_g$ be the band projection onto the band generated by $g$. Now as $g\in R(T_1)^+$ it follows that $Q$ and $T_1$ commute, see \cite[Theorem 3.2]{KLW2}.  As $(Y_i^+,T_i)$ is a submartingale, for $i\le j$, $T_iY_j^+\ge Y_i^+=T_i Y_i^+$, hence  
 \begin{equation}\label{sub-pos}
 T_i(Y_{j+1}^+-Y_j^+)\ge 0,
 \end{equation}
  and thus
\begin{equation}\label{RHC-max-1}
   (I-Q)T_{1}(I-U_{n})g=T_{1}(I-U_{n})(I-Q)g=0\leq (I-Q)\left(\dfrac{Y_{1}^{+}}{a_{1}}+\sum^{n-1}_{i=1}T_{1}\left[\dfrac{Y_{i+1}^{+}-Y_{i}^{+}}{a_{i+1}}\right]\right).
\end{equation}

Letting $X_i=Y_i^+/a_i, i\in\N,$ in Lemma \ref{LemmaFirst} we have, for $n\in\N$,
\begin{eqnarray} (I-Q_n)g\leq \frac{Y_{1}^+}{a_1}+\sum^{n-1}_{i=1}Q_{i}\left(\frac{Y^+_{i+1}}{a_{i+1}}-\frac{Y^+_{i}}{a_i}\right) - Q_{n}\frac{Y^+_{n}}{a_n}\le  \frac{Y_{1}^+}{a_1}+\sum^{n-1}_{i=1}Q_{i}\left(\frac{Y^+_{i+1}-Y^+_{i}}{a_{i+1}}\right)
 \label{pre-RHC-1}
\end{eqnarray}
where $\displaystyle{Q_{i}=P_{\left(g-\bigvee_{j=1}^i X_{j}\right)^{+}}}.$
Here $0\le Q_i\le I$ and $T_i(Y_{i+1}^+-Y_i^+)\ge 0$ so $Q_iT_i(Y_{i+1}^+-Y_i^+)\le T_i(Y_{i+1}^+-Y_i^+)$. Hence, as $T_1=T_1T_i$, from  (\ref{sub-pos}) and (\ref{pre-RHC-1}),
\begin{eqnarray} T_1(I-Q_n)g\leq
  \frac{Y_{1}^+}{a_1}+\sum^{n-1}_{i=1}T_1\left(\frac{Y^+_{i+1}-Y^+_{i}}{a_{i+1}}\right).
 \label{pre-RHC-2}
\end{eqnarray}
Since $g\ge 0$, we have 
$$g\wedge\left(\left(g-\frac{Y_i}{a_i}\right)\vee 0\right)=\left(g\wedge\left(g-\frac{Y_i}{a_i}\right)\right)\vee (g\wedge 0)=\left(g-\left(0\vee \frac{Y_i}{a_i}\right)\right)\vee  0,$$
giving $g\wedge \left(g-\frac{Y_i}{a_i}\right)^+=\left(g-\frac{Y_i^+}{a_i}\right)^+$, thus $QU_n=QQ_n$. 
Now, applying $Q$ to (\ref{pre-RHC-2}) and noting that $T_1Q=QT_1$ we have
\begin{eqnarray} 
 QT_1(I-U_n)g\leq
  Q\left(\frac{Y_{1}^+}{a_1}+\sum^{n-1}_{i=1}T_1\left(\frac{Y^+_{i+1}-Y^+_{i}}{a_{i+1}}\right)\right).
 \label{RHC-max-2}
\end{eqnarray}
Combining (\ref{RHC-max-1}) and (\ref{RHC-max-2}) gives (\ref{RHC-max}).
\qed
\end{proof}


\newsection{Submartingale convergence}

As an application of the H\'ajek-R\'enyi-Chow Maximal Inequality we give a weighted convergence theorem for submartingales, with a proof that is independent of upcrossing.

\begin{thm} [Submartingale convergence]\label{SubmartingaleConvergence}
 Let $p\geq 1$ and $(X_{i},T_{i})_{i\in\mathbb{N}}$
 be a non-negative submartingale in $L^p(T)$.
 Let $(a_{i})_{i\in\mathbb{N}}$ be a
 positive, non-decreasing, sequence of real numbers diverging to $\infty$. If
 \begin{equation}\label{PropositionBeforeStrongLaw}
  \sum^{\infty}_{i=1}T_{1}\left(\frac{X_{i+1}^{p}-X_{i}^{p}}{a_{i+1}^{p}}\right)
 \end{equation}
 converges in order, then $\frac{X_{n}}{a_{n}}$ tends to zero in order as $n$ tends to $\infty$.
\end{thm}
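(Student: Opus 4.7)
The strategy is to apply the H\'ajek-R\'enyi-Chow maximal inequality (Theorem \ref{MaximalIneq}) to the submartingale $(X_n^p,T_n)$---which is non-negative and in $L^1(T)$ by Jensen's inequality of \cite{JensensGrobler} applied to the convex, non-decreasing function $t\mapsto t^p$ on $[0,\infty)$---after shifting the index to start at an arbitrary $N$, and to combine the resulting bound with Kronecker's Lemma (Lemma \ref{LemKron}). Fix $\alpha>0$, set $Z_j:=X_j^p/a_j^p$, and apply the reindexed HRC inequality to $(X_{N+i-1}^p,T_{N+i-1})_{i\in\N}$ with weights $a_{N+i-1}^p$ and test element $g=\alpha e\in R(T_N)^+$. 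Taking $g_j:=Z_{N+j-1}-\alpha e$ in (\ref{LemmaExclEqualsEqn}) yields $P_{(W_n^N-\alpha e)^+}\le I-\tilde U_n$, where $W_n^N:=\bigvee_{j=N}^{N+n-1}Z_j$ and $\tilde U_n:=P_{(\alpha e-W_n^N)^+}$. Passing to the limit $n\to\infty$ via (\ref{LemBandProjLeftCts}) and the order continuity of $T_1$, and applying $T_1$ (using $T_1T_N=T_1$), we obtain
$$\alpha T_1 P_{H_N^{(\alpha)}}e\le T_1\!\left(\frac{X_N^p}{a_N^p}\right)+\sum_{i=N}^\infty T_1\!\left(\frac{X_{i+1}^p-X_i^p}{a_{i+1}^p}\right)=:\rho_N,$$
with $H_N^{(\alpha)}:=\bigvee_{j\ge N}(Z_j-\alpha e)^+$.

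We next show that $\rho_N\to 0$ in order as $N\to\infty$. The tail sum in $\rho_N$ vanishes as the $N$-th tail of the convergent series (\ref{PropositionBeforeStrongLaw}). For the leading term, telescoping and linearity of $T_1$ give $T_1X_N^p=T_1X_1^p+\sum_{j=1}^{N-1}T_1(X_{j+1}^p-X_j^p)$, so that $T_1X_1^p/a_N^p\to 0$ trivially and Kronecker's Lemma with $b_j:=a_{j+1}^p$ applied to the summable sequence $T_1(X_{j+1}^p-X_j^p)/a_{j+1}^p$ (given by the hypothesis) yields $T_1X_N^p/a_N^p\to 0$ in order. Since $P_{H_N^{(\alpha)}}e$ is decreasing in $N$, order continuity and strict positivity of $T_1$ then force $\bigwedge_N P_{H_N^{(\alpha)}}e=0$, and because $e$ is a weak order unit, $\bigwedge_N P_{H_N^{(\alpha)}}$ must be the zero band projection; that is, the intersection of the decreasing bands $B_N$ generated by $H_N^{(\alpha)}$ is trivial. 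As $H_N^{(\alpha)}\in B_N$, the order infimum $\bigwedge_N H_N^{(\alpha)}$ lies in $\bigcap_N B_N=\{0\}$, so $H_N^{(\alpha)}\downarrow 0$ in $E^u$, and in particular $(Z_N-\alpha e)^+\le H_N^{(\alpha)}\to 0$ in order.

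To finish, define $v_N:=\bigwedge_{k\in\N}\bigl((1/k)e+H_N^{(1/k)}\bigr)$. Then $v_N$ is decreasing in $N$, $Z_N\le v_N$, and swapping infima gives $\bigwedge_N v_N=\bigwedge_k(1/k)e=0$ by the Archimedean property; hence $Z_N\to 0$ in order, and the order continuity of the $p$-th root on $E^u_+$ (via functional calculus) yields $X_N/a_N\to 0$ in order. The principal technical obstacle is converting the $T_1$-integrated estimate $T_1 P_{H_N^{(\alpha)}}e\to 0$ into the elementwise statement $H_N^{(\alpha)}\downarrow 0$; this is handled by combining strict positivity of $T_1$ with the weak-order-unit property of $e$ to reduce matters to the fact that the decreasing bands $B_N$ have trivial intersection.
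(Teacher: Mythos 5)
Your proposal is correct and follows essentially the same route as the paper: Jensen's inequality to make $(X_n^p,T_n)$ a submartingale, Kronecker's Lemma to get $T_1X_N^p/a_N^p\to 0$, the H\'ajek--R\'enyi--Chow inequality applied to the shifted sequence $(X_i^p)_{i\ge N}$ with $g=\alpha e$ together with (\ref{LemmaExclEqualsEqn}) and (\ref{LemBandProjLeftCts}), and finally strict positivity of $T_1$ plus the weak-order-unit property of $e$ to kill the limiting band projection. The only (cosmetic) difference is the endgame: you convert $\bigwedge_N P_{H_N^{(\alpha)}}e=0$ into $H_N^{(\alpha)}\downarrow 0$ via the trivial intersection of the bands $B_N$ and then build the explicit dominating sequence $v_N$, whereas the paper phrases the same step through $\limsup$ and inequality (\ref{InterestingOnInfSide}); both versions tacitly assume the relevant countable suprema $\bigvee_{j\ge N}(Z_j-\alpha e)^+$ exist in $E^u$.
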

 
\begin{proof}
 By \cite[Corollary 4.5]{JensensGrobler}, $(X_{n}^{p},T_n)$ is a non-negative submartingale so 
 $Z_i:=(X_{i+1}^p-X_i^p)/a_{i+1}^p$ has $T_1Z_i\ge 0$ and by assumption $\sum_{i=1}^\infty T_1Z_i$ is order convergent, so by
 Lemma \ref{LemKron} 
\begin{equation}\label{AK}
 \frac{T_{1}X_{m+1}^{p}}{a_{m+1}^{p}}=\frac{1}{a_{m+1}^{p}}T_{1}\left(X_{1}^{p}+\sum^{m}_{i=1}(X_{i+1}^p-X_i^p)\right)=\frac{X_1^{p}}{a_{m+1}^{p}}+\frac{1}{a_{m+1}^{p}}\sum^{m}_{i=1}a_{i+1}^{p}T_{1}Z_i\to 0
\end{equation} in order as $m\rightarrow\infty$.

By (\ref{LemmaExclEqualsEqn}) and Theorem \ref{MaximalIneq} applied to $(X_i^p)_{i=m}^n$, with $g=te$, $t\in\R$ where $t>0$, for $n>m$, we have
\begin{equation}\label{ApplyHajek}
  T_{1}P_{\left(\bigvee_{i=m}^{n}\left(\frac{X_{i}^{p}}{a_{i}^{p}}-te\right)^{+}\right)}te
  \le T_{1}\left(I-P_{\bigwedge_{i=m}^{n}\left(\frac{X_{i}^{p}}{a_{i}^{p}}-te\right)^{-}}\right)te
 \le \dfrac{X_{m}^{p}}{a_{m}^{p}}+\sum^{n-1}_{i=m}T_{1}Z_i.
\end{equation}
Applying $T_1$ to (\ref{ApplyHajek}) and taking the order limit as $n\to\infty$, by (\ref{LemBandProjLeftCts}) we have
\begin{equation}\label{T1AndTakeLimitN}
 0\le t T_{1}\left(\bigvee_{i=m}^{\infty}P_{\left(\frac{X_{i}^{p}}{a_{i}^{p}}-te\right)^{+}}\right)e
 \leq T_{1}\left[\dfrac{X_{m}^{p}}{a_{m}^{p}}\right]+\sum^{\infty}_{i=m}T_{1}Z_i.
\end{equation}
Taking the order limit as $m\to \infty$ of (\ref{T1AndTakeLimitN}), by (\ref{AK}), we have
\begin{equation}\label{T1AndTakeLimitM}
 0\le t T_{1}\lim_{m\to\infty}\left(\bigvee_{i=m}^{\infty}P_{\left(\frac{X_{i}^{p}}{a_{i}^{p}}-te\right)^{+}}\right)e
 \leq \lim_{m\to\infty} T_{1}\left[\dfrac{X_{m}^{p}}{a_{m}^{p}}\right]+\lim_{m\to\infty}\sum^{\infty}_{i=m}T_{1}Z_i=0.
\end{equation}
Hence
$T_{1}\bigwedge_{m\in\N}\bigvee_{i=m}^{\infty}P_{\left(\frac{X_{i}^{p}}{a_{i}^{p}}-te\right)^{+}}e=0$ 
and by the strict positivity of $T_1$, 
$\bigwedge_{m\in\N}\bigvee_{i=m}^{\infty}P_{\left(\frac{X_{i}^{p}}{a_{i}^{p}}-te\right)^{+}}e=0$.
Now, by (\ref{LemBandProjLeftCts}) and (\ref{InterestingOnInfSide}),
$$0
\le P_{{\limsup_{i\to\infty}} \left(\frac{X_{i}^{p}}{a_{i}^{p}}-te\right)^{+}}e\le \bigwedge_{m\in\N}P_{\bigvee_{i=m}^{\infty}\left(\frac{X_{i}^{p}}{a_{i}^{p}}-te\right)^{+}}e
=\bigwedge_{m\in\N}\bigvee_{i=m}^{\infty}P_{\left(\frac{X_{i}^{p}}{a_{i}^{p}}-te\right)^{+}}e=0$$
and so
$\displaystyle{0\le \liminf_{i\to\infty} \frac{X_{i}^{p}}{a_{i}^{p}}\le \limsup_{i\to\infty} \frac{X_{i}^{p}}{a_{i}^{p}}\le te}$ for all $t>0$. Thus 
$\frac{X_{i}^{p}}{a_{i}^{p}}\to 0$ in order as $i\to\infty$.  	  	 
\qed
\end{proof}

{\bf Remark} Since $(X_{n}^{p},T_n)$ is a non-negative submartingale, by \cite[Corollary 4.5]{JensensGrobler}, taking $Y_{j+1}=\sum_{i=1}^j Z_i$ in the above theorem, we have that $(Y_j,T_j)$ is a $T_1$-bounded submartingale and Theorem \ref{SubmartingaleConvergence} follows directly from \cite[Theorem 3.5]{KLW3}.


\newsection{Chow's strong laws of large numbers}

We recall that $(Y_i,T_i)$ is a martingale difference sequence if $(T_i)$ is a filtration, $(Y_i)$ is adapted to $(T_i)$ and $T_{i-1}Y_i=0$ for $i\ge 2$.
In Theorem \ref{StrongLawPUnderTwo}, for $1\le p\le 2$, and Corollary \ref{p>2-cor} and Theorem \ref{InequalityForPOver2}, for $p>2$, Chow's strong law of large numbers is extended to martingale difference sequences in Riesz spaces.

\begin{thm}\label{StrongLawPUnderTwo}
	Let $1\le p\le 2$, and $(Y_{n},T_{n})_{n\in\mathbb{N}}$ be a martingale difference sequence in $L^{p}(T)$.
        Let $(a_{i})_{i\in\mathbb{N}}$ be a
	positive, non-decreasing sequence of real numbers divergent to infinity with
	\begin{equation}\label{HypothesisStrongLawPUnderTwo}
	\sum^{\infty}_{i=1}T_{1}\left(\frac{|Y_{i}|^{p}}{a_{i}^{p}}\right)
	\end{equation} 
       order convergent, then 
       $\displaystyle{\frac{1}{a_{n}}\sum_{i=1}^nY_i\to 0}$, in order, as $n$ tends to infinity.
\end{thm}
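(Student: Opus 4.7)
The plan is to reduce the statement to order convergence of the martingale $S_n := \sum_{i=1}^n Y_i/a_i$ via Kronecker's lemma (Lemma \ref{LemKron}), then establish a von Bahr--Esseen type bound $T_1|S_n|^p \le 2\sum_{i=1}^n T_1|D_i|^p$ using Clarkson's inequality (Theorem \ref{FinallyProvedInequality}), and finally invoke the $T_1$-bounded submartingale convergence theorem of \cite{KLW3} applied to $S_n^+$ and $S_n^-$ to obtain order convergence of $(S_n)$.

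Setting $D_i := Y_i/a_i$, the pair $(D_i, T_i)$ remains a martingale difference sequence (since $a_i$ is a positive scalar), and $(S_n, T_n)$ is a martingale. Lemma \ref{LemKron} applied with $x_i = D_i$ and $b_i = a_i$ shows that once $(S_n)$ converges in order, then $a_n^{-1}\sum_{i=1}^n Y_i = a_n^{-1}\sum_{i=1}^n a_i D_i \to 0$ in order, which is the desired conclusion. The central estimate, which I would prove by induction on $n$, is
\begin{equation*}
T_1 |S_n|^p \le 2\sum_{i=1}^n T_1|D_i|^p.
\end{equation*}
The inductive step applies Clarkson's inequality (Theorem \ref{FinallyProvedInequality}) with $x = S_{n-1}$ and $y = D_n$ to give
\begin{equation*}
|S_n|^p + |S_{n-1} - D_n|^p \le 2|S_{n-1}|^p + 2|D_n|^p;
\end{equation*}
applying $T_{n-1}$ and using the Riesz space conditional Jensen inequality of \cite{JensensGrobler} together with $T_{n-1}(S_{n-1} - D_n) = S_{n-1}$ (a consequence of $T_{n-1}D_n = 0$) yields $T_{n-1}|S_{n-1} - D_n|^p \ge |S_{n-1}|^p$. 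Composing with $T_1$ (using $T_1 T_{n-1} = T_1$) and rearranging give $T_1|S_n|^p \le T_1|S_{n-1}|^p + 2 T_1|D_n|^p$, closing the induction against the trivial base case.

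By hypothesis (\ref{HypothesisStrongLawPUnderTwo}), the series $\sum_{i=1}^\infty T_1|D_i|^p$ is order convergent to an element of $R(T_1)^+$, so the bound above forces $(T_1|S_n|^p)$ to be order bounded in $R(T_1)^+$, and Jensen applied to $\phi(t)=t^p$ in turn gives $T_1|S_n| \le (T_1|S_n|^p)^{1/p}$ order bounded. Since $x \mapsto x^+$ and $x \mapsto x^-$ are convex, \cite[Corollary 4.5]{JensensGrobler} shows that $(S_n^+, T_n)$ and $(S_n^-, T_n)$ are $T_1$-bounded non-negative submartingales, and \cite[Theorem 3.5]{KLW3} then yields their order convergence; therefore $S_n = S_n^+ - S_n^-$ converges in order, and Kronecker's Lemma completes the proof. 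The main obstacle is the von Bahr--Esseen estimate: it hinges on the sharp constant $2$ furnished by Clarkson (Theorem \ref{FinallyProvedInequality}), rather than the cruder $2^p$ bound noted just before that theorem, and on the Jensen reflection that absorbs the extra $|S_{n-1} - D_n|^p$ term appearing on the left of Clarkson's inequality.
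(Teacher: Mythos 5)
Your proposal is correct, and the key computation --- Clarkson's inequality (Theorem \ref{FinallyProvedInequality}) applied to the sum and the reflected difference, followed by the conditional Jensen inequality to absorb the reflected term and yield an increment bound of the form $T|{\textstyle\sum_{i\le n}}\cdot|^p \le T|{\textstyle\sum_{i\le n-1}}\cdot|^p + 2T|\cdot_n|^p$ --- is exactly the engine of the paper's proof as well. Where you diverge is in how that increment bound is consumed. The paper keeps the unnormalized sums $X_n=\sum_{i=1}^nY_i$, divides the increments by $a_{i+1}^p$, observes that $(|X_i|^p,T_i)$ is a non-negative submartingale so that the resulting series has non-negative terms dominated by the hypothesis (\ref{HypothesisStrongLawPUnderTwo}), and then feeds this directly into Theorem \ref{SubmartingaleConvergence}, i.e.\ into the H\'ajek--R\'enyi--Chow maximal inequality machinery that is the point of the paper; Kronecker's Lemma is used only inside that theorem's proof. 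You instead normalize first ($D_i=Y_i/a_i$), telescope the increment bound into a von Bahr--Esseen $L^p$-boundedness estimate $T_1|S_n|^p\le 2\sum_{i\le n}T_1|D_i|^p$ for the normalized martingale, deduce $T_1$-boundedness, invoke the upcrossing-based convergence theorem \cite[Theorem 3.5]{KLW3} for $S_n^\pm$, and finish with Kronecker's Lemma applied externally. Both are sound; your route is the classical textbook one and is arguably cleaner as a standalone argument, but it reimports the upcrossing theorem that the paper explicitly set out to avoid (the authors advertise Theorem \ref{SubmartingaleConvergence} as ``a proof which does not use of upcrossing'') and it bypasses the maximal inequality entirely, whereas the paper's route keeps the whole chain self-contained within its H\'ajek--R\'enyi--Chow framework and exhibits the strong law as a direct application of it.
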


\begin{proof}
  Let $X_n=\displaystyle{\sum_{i=1}^nY_i}$ then $X_{i}+Y_{i+1}=X_{i+1}$ and  $X_{i}-Y_{i+1}=2X_{i}-X_{i+1}$ so Theorem \ref{FinallyProvedInequality} can be applied to give
 \begin{equation}\label{slln-1}
  |X_{i+1}|^p + |2X_{i}-X_{i+1}|^{p}\leq 2(|X_{i}|^p+|Y_{i+1}|^p).
 \end{equation}
Now as $(X_n,T_n)$ is a martingale, so $T_i(2X_i-X_{i+1})=X_i$ and by functional calculus, see \cite{JensensGrobler}, $|X_i|^p\in R(T_i)$ giving $T_i|X_i|^p=|X_i|^p$, hence
\begin{equation}\label{slln-2}
 T_i|X_i|^p=|T_i(2X_i-X_{i+1})|^p\leq T_i|2X_i-X_{i+1}|^p
\end{equation}
 where the final inequality follows from Jensen's inequality,  \cite[Theorem 4.4]{JensensGrobler}.
Combining (\ref{slln-1}) and (\ref{slln-2}) we have
 \begin{equation}\label{slln-3}
  T_i|X_{i+1}|^p - T_i|X_{i}|^p\le 2T_i|Y_{i+1}|^p.
 \end{equation}
By \cite[Corollary 4.5]{JensensGrobler}, $(|X_{i}|,T_i)$ and $(|X_{i}|^p,T_i)$ are submartingales so 
 \begin{equation}\label{slln-4}
  0\le \frac{T_i|X_{i+1}|^p - T_i|X_{i}|^p}{a_{i+1}^p}\le 2\frac{T_i|Y_{i+1}|^p}{a_{i+1}^p}
 \end{equation}
 which, with (\ref{HypothesisStrongLawPUnderTwo}), yields that $\displaystyle{\sum_{i=1}^\infty \frac{T_i|X_{i+1}|^p - T_i|X_{i}|^p}{a_{i+1}^p}}$ is order
convergent. The theorem now follows from Theorem \ref{SubmartingaleConvergence}.
\qed
\end{proof}
 
We can now bootstrap on Theorem \ref{StrongLawPUnderTwo} to obtain a strong law for $p>2$.

\begin{cor}\label{p>2-cor}
	Let $p> 2$ and $(Y_{n},T_{n})_{n\in\mathbb{N}}$ be a martingale difference sequence in $L^{p}(T_1)$.
        Let $(a_{i})_{i\in\mathbb{N}}$ be a
	positive, non-decreasing sequence of real numbers with $\displaystyle{\sum_{i=1}^\infty \frac{1}{a_i^k}}$ convergent in $\R$, and 
	$\displaystyle{
	\sum^{\infty}_{i=1}T_{1}\left(\frac{|Y_{i}|^{p}}{a_{i}^\gamma}\right)}$
       order convergent, where $p\ge \gamma+(\frac{p}{2}-1)k$, then 
       $\displaystyle{\frac{1}{a_{n}}\sum_{i=1}^nY_i\to 0},$ in order, as $n$ tends to infinity.
\end{cor}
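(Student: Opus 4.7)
The plan is to bootstrap from Theorem \ref{StrongLawPUnderTwo} at exponent $2$. Specifically, it suffices to verify that $\sum_{i=1}^{\infty} T_{1}(|Y_{i}|^{2}/a_{i}^{2})$ is order convergent; then, since $(Y_{n},T_{n})$ remains a martingale difference sequence in $L^{2}(T_{1})$, Theorem \ref{StrongLawPUnderTwo} applied with $p=2$ delivers the desired conclusion $\frac{1}{a_{n}}\sum_{i=1}^{n}Y_{i}\to 0$ in order. Note that $(Y_{n},T_{n})\subset L^{2}(T_{1})$ follows from the trivial Young-type estimate $|Y_{i}|^{2}\le \frac{2}{p}|Y_{i}|^{p}+\frac{p-2}{p}e$ combined with $|Y_{i}|^{p}\in L^{1}(T_{1})$ and $T_{1}e=e$.

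To produce the required order-convergent majorant, I would invoke Young's inequality in the $f$-algebra $E^{u}$ with conjugate exponents $r=p/2$ and $s=p/(p-2)$ (which satisfy $1/r+1/s=1$ since $2/p+(p-2)/p=1$), after the factorisation
$$\frac{|Y_{i}|^{2}}{a_{i}^{2}}=\left(\frac{|Y_{i}|^{2}}{a_{i}^{2\gamma/p}}\right)\left(\frac{1}{a_{i}^{2-2\gamma/p}}\right).$$
The real-variable inequality $ab\le a^{r}/r+b^{s}/s$ transfers to positive elements of $E^{u}$ via the functional calculus available in its $f$-algebra structure (as used in \cite{LpSpacesAz, JensensGrobler}), so that
$$\frac{|Y_{i}|^{2}}{a_{i}^{2}}\le \frac{2}{p}\cdot\frac{|Y_{i}|^{p}}{a_{i}^{\gamma}}+\frac{p-2}{p}\cdot\frac{1}{a_{i}^{2(p-\gamma)/(p-2)}}\,e.$$

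Applying the positive linear operator $T_{1}$ and summing then gives
$$\sum_{i=1}^{\infty}T_{1}\left(\frac{|Y_{i}|^{2}}{a_{i}^{2}}\right)\le \frac{2}{p}\sum_{i=1}^{\infty}T_{1}\left(\frac{|Y_{i}|^{p}}{a_{i}^{\gamma}}\right)+\frac{p-2}{p}\left(\sum_{i=1}^{\infty}\frac{1}{a_{i}^{2(p-\gamma)/(p-2)}}\right)e.$$
The first term on the right is order convergent by hypothesis. For the scalar series, the condition $p\ge \gamma+(p/2-1)k$ rearranges to $2(p-\gamma)/(p-2)\ge k$; since convergence of $\sum 1/a_{i}^{k}$ for a non-decreasing $(a_{i})$ forces $a_{i}\to\infty$, one has $1/a_{i}^{2(p-\gamma)/(p-2)}\le 1/a_{i}^{k}$ for $i$ sufficiently large, so this real series converges. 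Hence the majorant is order convergent and the reduction is complete.

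The main delicacy I foresee is arranging Young's inequality in $E^{u}$ with precisely the fractional powers deployed — but this reduces cleanly to the $f$-algebra functional calculus and the Jensen/H\"older machinery already set up earlier in the paper. Once that is in place, every remaining step is either a positive-operator manipulation, a real-variable comparison test, or a direct invocation of Theorem \ref{StrongLawPUnderTwo}, so no further obstacles are expected.
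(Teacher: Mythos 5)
Your proposal is correct and follows essentially the same route as the paper: both reduce to Theorem \ref{StrongLawPUnderTwo} with exponent $2$ by showing $\sum_{i}T_{1}(|Y_{i}|^{2}/a_{i}^{2})$ is order convergent, using the same factorisation of $|Y_{i}|^{2}/a_{i}^{2}$ and the same conjugate exponents $p/2$ and $p/(p-2)$, and the same verification that $\delta=2(p-\gamma)/(p-2)\ge k$ makes the scalar series converge. The only difference is the estimating tool: the paper applies its H\"older inequality for sums (Theorem \ref{ThmHolders}) to bound the tail sums by a product, whereas you apply a pointwise Young inequality $ab\le a^{r}/r+b^{s}/s$ termwise and sum. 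Your variant is marginally more elementary here, since the second factor is a scalar multiple of $e$, so the needed inequality follows from single-variable functional calculus rather than the full Riesz-space H\"older machinery; you also make explicit the (implicit in the paper) point that $L^{p}(T_1)\subset L^{2}(T_1)$ so that the $p=2$ theorem is applicable. Both arguments are sound.
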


\begin{proof}
 From H\"older's inequality, Theorem \ref{ThmHolders}, for $n>m$ we have
	$$\sum^{n}_{i=m}T_1\frac{|Y_{i}|^2}{a_i^2}\leq\left(\sum^{n}_{i=m}T_1\frac{|Y_{i}|^p}{a_i^\gamma}\right)^{\frac{2}{p}}\left(\sum^{n}_{i=m}\frac{e}{a_i^\delta}\right)^{1-\frac{2}{p}}$$
where $\delta=\frac{p-\gamma}{\frac{p}{2}-1}\ge k$ ensuring that
 $\displaystyle{\sum_{i=1}^\infty \frac{1}{a_i^\delta}}$ converges.
 Hence from Theorem \ref{StrongLawPUnderTwo} with $p=2$,
     $\displaystyle{\frac{1}{a_{n}}\sum_{i=1}^nY_i\to 0},$ in order, as $n$ tends to infinity.
\qed
\end{proof}

From Corollary \ref{p>2-cor}, if 
 $p> 2$ and $(Y_{n},T_{n})_{n\in\mathbb{N}}$ is a martingale difference sequence in $L^{p}(T_1)$ with
 	$\displaystyle{
	\sum^{\infty}_{i=1}T_{1}\left(\frac{|Y_{i}|^{p}}{i^{1+\frac{p}{2}-\delta}}\right)}$
       order convergent for some $\delta>0$  then 
       $\displaystyle{\frac{1}{n}\sum_{i=1}^nY_i\to 0}$, in order, as $n$ tends to infinity.
For this special case, of $a_i=i$, a more precise result can be given, as per \cite{ChowLLNPBig,ChowIneqAndLLN}. 

\begin{thm}\label{InequalityForPOver2}
	Let $p>2$ be a fixed number and let $(Y_{n},T_{n})_{n\in\mathbb{N}}$ be a martingale difference sequence in $L^{p}(T_1)$.  If $\displaystyle{\sum^{\infty}_{i=1}T_{1}\left(\frac{|Y_{i}|^{p}}{i^{1+\frac{p}{2}}}\right)}$ converges in order then $\displaystyle{\frac{1}{n}\sum^{n}_{i=1}Y_{i}\to 0}$ in order as $n\to\infty$.
\end{thm}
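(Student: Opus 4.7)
Let $X_n := \sum_{i=1}^n Y_i$, so that $(X_n, T_n)$ is a martingale in $L^p(T_1)$ and $(|X_n|, T_n)$ is a non-negative submartingale by Jensen's inequality \cite{JensensGrobler}. The strategy is to apply Theorem \ref{SubmartingaleConvergence} to $(|X_n|, T_n)$ with weights $a_n = n$: this will yield $|X_n|/n \to 0$ in order, and hence $n^{-1}\sum_{i=1}^n Y_i \to 0$ in order as required. What must be verified is the order convergence of
\[
\sum_{i=1}^\infty T_1\!\left(\frac{|X_{i+1}|^p - |X_i|^p}{(i+1)^p}\right).
\]

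Set $A_n := T_1|X_n|^p$, which is non-decreasing in $n$ because $(|X_n|^p, T_n)$ is a submartingale. Summation by parts rewrites the partial sums of the target series as
\[
 \frac{A_{n+1}}{(n+1)^p} + \sum_{j=2}^n \!\left(\frac{1}{j^p}-\frac{1}{(j+1)^p}\right)\! A_j - \frac{A_1}{2^p},
\]
so, in view of the estimate $1/j^p-1/(j+1)^p \le p/j^{p+1}$, it is enough to control $\sum_j A_j/j^{p+1}$ and the boundary term $A_{n+1}/(n+1)^p$. To bound $A_n$ I would invoke Burkholder's inequality (Theorem \ref{ThmBurkholders}) with $T=T_1$, which is legitimate since $T_1T_n = T_nT_1 = T_1$ is built into the filtration, giving $A_n \le c_p\, T_1 S_n^{p/2}$ with $S_n = \sum_{i=1}^n Y_i^2$. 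Then, in the same product-Riesz-space style used in the proof of Theorem \ref{FinallyProvedInequality} --- Jensen on $[L^1(T_1)]^n$ with the averaging conditional expectation $\mathbb{F}(u_i) = (n^{-1}\sum_j u_j)_i$ and the convex function $t \mapsto t^{p/2}$ (valid since $p/2>1$) --- one gets $S_n^{p/2} \le n^{p/2-1}\sum_{i=1}^n |Y_i|^p$, and therefore
\[
 A_n \le c_p\, n^{p/2-1}\sum_{i=1}^n T_1|Y_i|^p.
\]

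Interchanging the order of summation, valid for non-negative families in a Dedekind complete Riesz space, yields
\[
 \sum_{j=1}^\infty \frac{A_j}{j^{p+1}} \le c_p \sum_{k=1}^\infty T_1|Y_k|^p \!\sum_{j\ge k}\! \frac{1}{j^{p/2+2}} \le C \sum_{k=1}^\infty \frac{T_1|Y_k|^p}{k^{1+p/2}},
\]
and the right-hand side is order convergent by hypothesis. The boundary term is handled by the same Burkholder--Jensen bound, which gives $A_{n+1}/(n+1)^p \le c_p (n+1)^{-(1+p/2)}\sum_{k=1}^{n+1} T_1|Y_k|^p$, tending to $0$ in order by Kronecker's Lemma (Lemma \ref{LemKron}) applied with $b_k = k^{1+p/2}$ to the hypothesised series. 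Putting the pieces together, the original series converges in order, Theorem \ref{SubmartingaleConvergence} applies, and the conclusion follows. The delicate point --- and what forces the Burkholder route rather than the H\"older route used in Corollary \ref{p>2-cor} --- is that the Jensen-on-an-average step produces exactly the exponent $1+p/2$ on $k$ after Fubini, precisely matching the weight appearing in the hypothesis; the remaining summation manipulations are routine given Dedekind completeness.
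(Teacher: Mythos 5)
Your proposal is correct and follows the same skeleton as the paper's proof: reduce via Theorem \ref{SubmartingaleConvergence} with $a_n=n$, Abel-sum the resulting non-negative series, and bound $T_1|X_n|^p$ by a constant times $n^{p/2-1}\sum_{i=1}^n T_1|Y_i|^p$ using Burkholder's inequality combined with a power-mean step. Two places where you deviate are worth recording, and both work in your favour. First, you apply the discrete Jensen inequality $\bigl(\tfrac{1}{n}\sum_i|Y_i|^2\bigr)^{p/2}\le\tfrac{1}{n}\sum_i|Y_i|^p$ at the level of elements of $E^u$ and only afterwards apply $T_1$; the paper instead routes through the intermediate inequality $T_1\bigl(\sum_i|Y_i|^2\bigr)^{p/2}\le\bigl(\sum_i T_1|Y_i|^2\bigr)^{p/2}$, which for the convex map $t\mapsto t^{p/2}$ is Jensen's inequality written in the reversed direction; your ordering of the steps avoids this issue while arriving at the same bound $T_1|X_n|^p/n^p\le C_p^{-1}n^{-(p/2+1)}\sum_{i\le n}T_1|Y_i|^p$. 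Second, and more substantively, for the summability of $\sum_j(j^{-p}-(j+1)^{-p})\,T_1|X_j|^p$ the paper uses Kronecker's Lemma only to conclude that $T_1|X_j|^p/j^p$ is order bounded by some $h$ and then asserts a bound of the form $p\sum_j j^{-p/2}h$; from $T_1|X_j|^p\le j^p h$ alone one only gets the divergent majorant $p\sum_j h/j$, so that step as written does not close. Your interchange of the order of summation for the non-negative double family, yielding $\sum_k T_1|Y_k|^p\sum_{j\ge k}j^{-(p/2+2)}\le C\sum_k T_1|Y_k|^p/k^{1+p/2}$, is the clean way to finish and uses the hypothesised series exactly at the borderline exponent $1+\tfrac{p}{2}$; your treatment of the boundary term $A_{n+1}/(n+1)^p$ via Kronecker's Lemma coincides with the paper's. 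In short: same strategy, but your execution of the final estimate is the more careful one.
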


\begin{proof}
Let $\displaystyle{X_{n}=:\sum^{n}_{i=1}Y_{i}}$ for $n\in\mathbb{N}$, then, from Theorem \ref{SubmartingaleConvergence}, it suffices to prove the convergence as $n\rightarrow \infty$ of 
\begin{equation*}
 Z_n=\sum^{n}_{i=2}T_{1}\left(\frac{|X_{i}|^p-|X_{i-1}|^p}{i^p}\right)
 =\sum^{n-1}_{i=2}\left(\frac{1}{i^p}-\frac{1}{(i+1)^p}\right) T_{1}(|X_{i}|^p) + \frac{T_{1}(|X_{n}|^p)}{n^p}-\frac{T_{1}(|X_{1}|^p)}{2^p}.
\end{equation*}
	Since each term in the above summations is non-negative we need only show the boundedness of $Z_n, n\in\N$. 
 From Burkholder inequality, Theorem \ref{ThmBurkholders}, there is $C_p>0$ so that 
\begin{equation}\label{SLLN-burk}
  C_pT_1|X_n|^p\le T_1\left(\sum_{i=1}^n |Y_i|^2\right)^{p/2},
\end{equation}
for all $n\in\N$. Applying Jensen's inequality of \cite{JensensGrobler} we have
\begin{equation}\label{SLLN-jensen}
  T_1\left(\sum_{i=1}^n |Y_i|^2\right)^{p/2}\le \left(\sum_{i=1}^n T_1|Y_i|^2\right)^{p/2}.
\end{equation}
Now H\"older inequality, Theorem \ref{ThmHolders}, gives
\begin{equation}\label{SLLN-holder}
  \left(\sum_{i=1}^n T_1|Y_i|^2\right)^{p/2}\le n^{\frac{p}{2}-1}\sum_{i=1}^n T_1|Y_i|^p.
\end{equation}
Combining (\ref{SLLN-burk}), (\ref{SLLN-jensen}) and (\ref{SLLN-holder}) gives
\begin{equation}\label{SLLN-bound}
  \frac{T_1|X_n|^p}{n^p}\le \frac{1}{C_pn^{\frac{p}{2}+1}}\sum_{i=1}^n T_1|Y_i|^p.
\end{equation}
From Kronecker's Lemma, Theorem \ref{LemKron}, we have that
$\displaystyle{\frac{1}{n^{\frac{p}{2}+1}}\sum_{i=1}^n T_1|Y_i|^p\to 0}$
in order as $n\to \infty$. Thus the left hand side of (\ref{SLLN-bound}) is order bounded by say $h\in L^1(T_1)$ and 
\[\sum^{n-1}_{i=2}\left(\frac{1}{i^p}-\frac{1}{(i+1)^p}\right) T_{1}(|X_{i}|^p)\le p\sum^\infty_{i=1}\frac{1}{i^{p/2}} h,\]
giving $$Z_n\le p\sum^\infty_{i=1}\frac{1}{i^{p/2}} h +h-\frac{T_{1}(|X_{1}|^p)}{2^p}.$$ Here we have used that $n^{-p}-(n+1)^{-p}\le pn^{-p-1}, n\in\N$.
\qed
\end{proof}



\begin{thebibliography}{99}
\bibitem{BCS}{{\sc A. Boccuto, D. Candeloro, A. Sambucini}, {$L^p$ spaces in vector lattices and applications}, {\em Mathematica Slovaca}, {\bf 67} (2017), 1409-1426.}
\bibitem{BurkholdersAz}{{\sc Y. Azouzi, K. Ramdane}, {Burkholder Inequalities in Riesz spaces}, {\em Indag. Math.}, {\bf 28} {(2017), 1076-1094.}}	
\bibitem{LpSpacesAz}{{\sc Y. Azouzi, M. Trabelsi}, {$L^{p}$-spaces with respect to conditional expectation on Riesz spaces}, {\em J. Math. Anal. Appl.}, {\bf 447} {(2017), 798-816.}}	
\bibitem{ChowIneqAndLLN}{{\sc Y. S. Chow}, {A martingale inequality and the law of large numbers}, {\em Proc. Amer. Math. Soc.}, {\bf 11} {(1960), 107-111.}}
\bibitem{ChowLLNPBig}{{\sc Y. S. Chow}, {On a strong law of large numbers}, {\em Ann. Math. Statist.}, {\bf 38} {(1967), 610-611.}}
\bibitem{EAndSBook}{{\sc G. A. Edgar, L. Sucheston}, {\em Stopping times and directed processes}, {Cambridge Univ. Press, 1992.}}
\bibitem{IsraelRef}{{\sc N. Gao, V. G. Troitsky, and F. Xanthos}, {UO-Convergence and its applications to Ces\`aro means in Banach lattices}, {\em Israel Journal of Mathematics}, {\bf 220} {(2017), 649-689.}}
\bibitem{JJG-1}{{\sc J. J. Grobler}, {Continuous stochastic processes in Riesz spaces: the Doob-Meyer decomposition}, 
{\em Positivity}, {\bf 14} (2010) 731-751.}
\bibitem{JJG-2}{{\sc J. J. Grobler}, {Doob's optional sampling theorem in Riesz spaces}, 
{\em Positivity}, {\bf 15} (2011) 617-637.}
\bibitem{JensensGrobler}{{\sc J. J. Grobler}, {Jensen's and martingale inequalities in Riesz spaces}, {\em Indag. Math., N.S.}, {\bf 25} {(2014), 275-295.}}
\bibitem{QuadVarGrobler}{{\sc J. J. Grobler, C. Labuschagne, V. Marraffa}, {Quadratic variation of martingales in Riesz spaces}, {\em J. Math. Anal. Appl.}, {\bf 410} {(2014), 418-426.}}
\bibitem{KLW1}{{\sc W.-C. Kuo, C. C. A. Labuschagne, B. A. Watson}, {Discrete time stochastic processes on Riesz spaces}, {\em Indag. Math., N.S.}, {\bf 15} {(2004), 435-451.}}
\bibitem{KLW2}{{\sc W.-C. Kuo, C. C. A. Labuschagne, B. A. Watson}, {Conditional expectations on Riesz spaces}, {\em J. Math. Anal. Appl.}, {\bf 303} {(2005), 509-521.}}	
\bibitem{KLW3}{{\sc W.-C. Kuo, C. C. A. Labuschagne, B. A. Watson}, {Convergence of Riesz Space Martingales}, {\em Indag. Math.}, {\bf 17} {(2006), 271-283.}}
\bibitem{KLW4}{{\sc W.-C. Kuo, C. C. A. Labuschagne, B. A. Watson}, {Ergodic Theory and the Law of Large Numbers on Riesz Spaces}, {\em J. Math. Anal. Appl.}, {\bf 325} {(2007), 422-437.}}
\bibitem{KRW}{{\sc W.-C Kuo, M.J. Rogans, B. A. Watson}, {Mixing Inequalities in Riesz spaces}, {\em J. Math. Anal. Appl.}, {\bf 456} {(2017), 992-1004.}}
\bibitem{KVW1}{{\sc W.-C Kuo, J. J. Vardy, B. A. Watson}, {Mixingales on Riesz spaces}, {\em J. Math. Anal. Appl.}, {\bf 402} {(2013), 731-738.}}
\bibitem{KVW2}{{\sc W.-C Kuo, J. J. Vardy, B. A. Watson}, {Bernoulli processes in Riesz spaces}, {\em Ordered Structures and Applications: Positivity VII
Trends in Mathematics}, 263-274.}
\bibitem{LW}{{\sc C. C. A. Labuschagne, B. A. Watson}, {Discrete stochastic integrals in Riesz Spaces}, {\em Positivity}, {\bf 14} {(2010), 859-875.}}
\bibitem{Ramaswamy}{{\sc S. Ramaswamy}, {A simple proof of Clarkson's inequality}, {\em Proc. Amer. Math. Soc.}, {\bf 68} {(1978), 249-250.}} 
\bibitem{Stoica}{{\sc G. Stoica}, {Limit laws for martingales in vector lattices}, {\em J. Math. Anal. Appl.}, {\bf 476} {(2019), 715-719.}}
\bibitem{Vardy}{{\sc J. J. Vardy}, {\em Markov Processes and Martingale Generalisations on Riesz Spaces}, PhD Thesis, University of the Witwatersrand, 2013.}
\bibitem{ZaanenBook}{{\sc A. C. Zaanen}, {\em Introduction to operator theory in Riesz spaces}, {Springer-Verlag, Berlin and Heidelberg, 1991.}}
\bibitem{RieszSpacesIIZaanenBook}{{\sc A. C. Zaanen}, {\em Riesz Spaces II}, {North-Holland, Amsterdam, New York, 1983.}}
\end{thebibliography}
\end{document}